\documentclass[10pt, technote, onecolumn]{IEEEtran}
\usepackage{times}
\usepackage{helvet}
\usepackage{courier}
\usepackage{hyperref}
\usepackage[margin=0.5in]{geometry}

\usepackage{amssymb,amsthm,amsmath}
\usepackage{mathtools}
\usepackage{bbm}
\allowdisplaybreaks

\usepackage{tikz}
\usepackage{xifthen}
\usetikzlibrary{calc}

\newtheorem{theorem}{Theorem}
\newtheorem{lemma}{Lemma}
\newtheorem{proposition}{Proposition}

\title{An Improved Lower Bound for the Traveling Salesman Constant}
\author{Julia Gaudio and Patrick Jaillet
\thanks{J. Gaudio and P. Jaillet are with the Massachusetts Institute of Technology.}}
\begin{document}
\maketitle
\begin{abstract}
Let $X_1, X_2, \dots, X_n$ be independent uniform random variables on $[0,1]^2$. Let $L(X_1, \dots, X_n)$ be the length of the shortest Traveling Salesman tour through these points. It is known that there exists a constant $\beta$ such that \[ \lim_{n \to \infty} \frac{L(X_1, \dots, X_n)}{\sqrt{n}} =  \beta \] almost surely (\cite{Beardwood1959}). The original analysis in \cite{Beardwood1959} showed that $\beta \geq 0.625$. Building upon an approach proposed in \cite{Steinerberger2015}, we improve the lower bound to $\beta \geq 0.6277$.
\end{abstract}

\section{Introduction}
 Let $X_1, \dots, X_n$ be independent uniform random variables on $[0,1]^2$. Let $d(x,y) = \Vert x - y \Vert_2$ be the Euclidean distance. Let $L(X_1, \dots, X_n)$ be the distance of the optimal Traveling Salesman tour through these points, under distance $d(\cdot, \cdot)$. In seminal work, Beardwood et al (1959) analyzed the limiting behavior of the value of the optimal Traveling Salesman tour length, under the random Euclidean model.
 
 \begin{theorem}[\cite{Beardwood1959}]
 There exists a constant $\beta$ such that
 \[ \lim_{n \to \infty} \frac{L(X_1, \dots, X_n)}{\sqrt{n}} =  \beta \] almost surely.
 \end{theorem}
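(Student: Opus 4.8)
The plan is to prove the result in three steps: (i) show that the \emph{expectation} $\phi(n) := \mathbb{E}[L(X_1,\dots,X_n)]/\sqrt n$ converges to a finite constant $\beta$, via an approximate subadditivity across a grid of subsquares; (ii) show $\mathrm{Var}(L(X_1,\dots,X_n)) = O(1)$ via the Efron--Stein inequality; and (iii) upgrade to almost-sure convergence along all of $\mathbb{N}$ by a Borel--Cantelli argument along a polynomial subsequence followed by an interpolation that uses monotonicity of the optimal tour length under adding points. Two deterministic facts will be used throughout. Writing $L_n := L(X_1,\dots,X_n)$: a boustrophedon tour through the cells of a $\lceil \sqrt n\rceil \times \lceil\sqrt n\rceil$ grid, visiting the points of each cell in turn, gives $L_n \le C\sqrt n$ for an absolute constant $C$ and every configuration, so $\mathbb{E}[L_n] \le C\sqrt n$; and if $S \subseteq S'$ then $L(S) \le L(S')$, since shortcutting an optimal tour on $S'$ past the points of $S' \setminus S$ only decreases its length by the triangle inequality. (One can also note $\mathbb{E}[L_n] \ge \tfrac12 \sum_i \mathbb{E}[\min_{j\ne i}\|X_i - X_j\|] \ge c\sqrt n$, which will show the eventual constant is strictly positive, though this is not needed for the statement.)

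For step (i), I would partition $[0,1]^2$ into $m^2$ congruent subsquares of side $1/m$ and let $N_j$ be the number of the $n$ points falling in subsquare $j$. Concatenating the $m^2$ optimal subtours into one tour by breaking a single edge of each and linking consecutive subsquares along a snake path over the grid adds at most $O(m)$ total length — there are $O(m^2)$ splices, each of length $O(1/m)$ — so $L_n \le \sum_{j=1}^{m^2} \ell_j + O(m)$, where $\ell_j$ is the optimal tour length through the points in subsquare $j$. By scale invariance, conditional on $N_j$ the length $\ell_j$ has the law of $m^{-1} L_{N_j}$, and $N_j \sim \mathrm{Bin}(n,1/m^2)$ concentrates around $n/m^2$. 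Taking expectations, using the a priori bound $L_k \le C\sqrt k$ to dominate the contribution of the (rare) cells with anomalously many points, and writing $n = m^2 k$, I expect to obtain
\[
\phi(m^2 k) \;\le\; \phi(k) \;+\; \frac{C'}{\sqrt k}.
\]
Given an arbitrary $n$, choosing $m = \lfloor \sqrt{n/k}\rfloor$ so that $m^2 k \le n < (m+1)^2 k$ and sandwiching $L_{m^2 k} \le L_n \le L_{(m+1)^2 k}$ via monotonicity gives $\limsup_n \phi(n) \le \phi(k) + C'/\sqrt k$ for every fixed $k$; since $C'/\sqrt k \to 0$, this forces $\limsup_n \phi(n) \le \liminf_k \phi(k) \le \liminf_n \phi(n)$, so $\lim_n \phi(n)$ exists. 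Denote it $\beta$; it is finite by the $O(\sqrt n)$ bound.

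For step (ii), let $L_n^{(i)}$ be the tour length after replacing $X_i$ with an independent copy $X_i'$. Deleting the two tour edges at $X_i$ in an optimal tour on $\{X_1,\dots,X_n\}$ and reconnecting its two neighbors directly produces a tour on the remaining points of no greater length; inserting $X_i'$ immediately next to its nearest point among $\{X_j\}_{j\ne i}$ then costs at most $2\min_{j\ne i}\|X_i' - X_j\|$. This and the symmetric bound give $|L_n - L_n^{(i)}| \le 2\big(\min_{j\ne i}\|X_i - X_j\| + \min_{j\ne i}\|X_i' - X_j\|\big)$, and $\mathbb{E}\big[(\min_{j\ne i}\|X_i - X_j\|)^2\big] = O(1/n)$ by a direct tail computation. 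The Efron--Stein inequality $\mathrm{Var}(L_n) \le \tfrac12 \sum_{i=1}^n \mathbb{E}[(L_n - L_n^{(i)})^2]$ then yields $\mathrm{Var}(L_n) = O(1)$.

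Finally, for step (iii): along the subsequence $n_k = k^2$, Chebyshev and step (ii) give $\mathbb{P}(|L_{k^2} - \mathbb{E}L_{k^2}| > \varepsilon k) = O(1/k^2)$, which is summable, so Borel--Cantelli and step (i) give $L_{k^2}/k \to \beta$ almost surely. For $k^2 \le n < (k+1)^2$, monotonicity gives $L_{k^2} \le L_n \le L_{(k+1)^2}$, hence
\[
\frac{L_{k^2}}{k+1} \;\le\; \frac{L_n}{\sqrt n} \;\le\; \frac{L_{(k+1)^2}}{k},
\]
and both bounds converge to $\beta$ almost surely since $(k+1)/k \to 1$; thus $L_n/\sqrt n \to \beta$ a.s. The main obstacle is step (i): the naive substitution $N_j \approx n/m^2$ must be justified uniformly enough that $\mathbb{E}[\sum_j \ell_j] = m^2 \cdot m^{-1}\phi(n/m^2)\sqrt{n/m^2}\,(1+o(1))$ despite the binomial tails, and the $O(m)$ stitching overhead must be shown to be negligible after dividing by $\sqrt n$; these two estimates, together with the elementary but genuine geometric lemma underlying the re-routing bound in step (ii), are where the real work lies.
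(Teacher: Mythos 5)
This statement is Theorem 1 of the paper, quoted from Beardwood--Halton--Hammersley; the paper itself offers no proof of it (the only argument sketched in Section 2 is for the separate lower-bound theorem $\beta \ge 5/8$), so there is no in-paper proof to compare against. Judged on its own, your outline is correct and is essentially the now-standard ``subadditive Euclidean functional'' proof (Steele's route, as in Steele's and Yukich's monographs) rather than the original 1959 argument: Beardwood, Halton and Hammersley worked with the Poissonized process, exploited exact self-similarity and independence of the Poisson process over a dyadic decomposition to get convergence of the mean, and then de-Poissonized, whereas you get approximate subadditivity of $\mathbb{E}[L_n]$ directly for the binomial model and replace their more delicate almost-sure argument with the Efron--Stein variance bound $\mathrm{Var}(L_n)=O(1)$ plus Borel--Cantelli along $n_k=k^2$ and monotone interpolation. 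Your route is cleaner and more robust (it generalizes to other smooth subadditive functionals and to higher dimensions with the obvious exponent changes), at the cost of needing the two estimates you correctly flag as the real work: controlling $\mathbb{E}[L_{N_j}]$ against $\mathbb{E}[L_{n/m^2}]$ despite binomial fluctuations (which follows from the add-one-point smoothness bound $0\le \mathbb{E}[L_{j+1}]-\mathbb{E}[L_j]=O(1/\sqrt{j})$, giving an $O(1)$ error per cell and hence the $C'/\sqrt{k}$ term), and the $O(m)$ stitching cost. The interpolation steps implicitly require the nested coupling $L(X_1,\dots,X_m)\le L(X_1,\dots,X_n)$ for $m\le n$ on a single i.i.d.\ sequence, which is exactly the setting of the theorem, so that is fine. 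I see no gap in the outline beyond the standard details you already identify.
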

 
The authors additionally showed in \cite{Beardwood1959} that
 \[0.625 \leq \beta \leq \beta_+\]
 where $\beta_+ = 2 \int_{0}^{\infty} \int_0^{\sqrt{3}} \sqrt{z_1^2 + z_2^2} e^{-\sqrt{3}z_1} \left(1 - \frac{z_2}{\sqrt{3}}\right) dz_2 dz_1.$ This integral is equal to approximately $0.92116$ (\cite{Steinerberger2015}). To date, the only improvement to the upper bound was given in \cite{Steinerberger2015}, showing that $\beta \leq \beta_+ - \epsilon_0$, for an explicit $\epsilon_0 > \frac{9}{16}10^{-6}$. In \cite{Steinerberger2015}, the author also claimed to improve the lower bound; however, we have found a fault in the argument. 
 
The rest of this note is structured as follows. In Section \ref{sec:approaches}, we present the proof of $\beta \geq 0.625$ by \cite{Beardwood1959}. We then outline the approach of \cite{Steinerberger2015} to improve the bound. Section \ref{sec:lower-bound} corrects the result in \cite{Steinerberger2015}, giving the lower bound $\beta \geq 0.625 + \frac{19}{10368} \approx 0.6268$. Finally, Section \ref{sec:improvement} tightens the argument of \cite{Steinerberger2015} to derive the improved bound, $\beta \geq 0.6277$.

\section{Approaches for the Lower Bound}\label{sec:approaches}
By the following lemma, we can equivalently study the limiting behavior of $\frac{\mathbb{E}\left[L(X_1, \dots, X_n)\right]}{\sqrt{n}}$.
\begin{lemma}[\cite{Beardwood1959}]\label{lemma:expectation}
It holds that
\[\frac{\mathbb{E}\left[L(X_1, \dots, X_n)\right]}{\sqrt{n}} \to \beta.\]
\end{lemma}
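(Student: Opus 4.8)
The plan is to upgrade the almost sure convergence of Theorem~1 to convergence of expectations, using the elementary observation that $Y_n := L(X_1,\dots,X_n)/\sqrt{n}$ is \emph{uniformly} bounded by a deterministic constant. Once that is in hand, the bounded convergence theorem finishes the proof immediately: $Y_n \to \beta$ almost surely (Theorem~1) and $0 \le Y_n \le C$ surely together imply $\mathbb{E}[Y_n] \to \beta$, which is exactly the claim.

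The one substantive ingredient is therefore the deterministic bound $L(x_1,\dots,x_n) \le C\sqrt{n}$, valid for \emph{every} configuration $x_1,\dots,x_n \in [0,1]^2$ and every $n \ge 1$, with an absolute constant $C$. I would prove this by the classical strip heuristic: partition $[0,1]^2$ into $m := \lceil \sqrt{n}\, \rceil$ vertical strips of width $1/m$, list the points strip by strip from left to right, and within each strip list them in order of vertical coordinate, reversing the order in alternate strips so that the resulting path ``snakes'' up and down. Since consecutive points lying in a common strip are within horizontal distance $1/m$, the total horizontal length of the path is $O(n/m) + O(m) = O(\sqrt{n})$; since within each strip the path is monotone in the vertical coordinate, the total vertical length is $O(m) = O(\sqrt{n})$; and closing the path into a tour costs at most $\sqrt{2}$. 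Hence $L(x_1,\dots,x_n) \le C\sqrt{n}$, i.e.\ $0 \le Y_n \le C$ surely. (Any of the standard worst-case bounds --- the strip heuristic, a space-filling curve, or Few's theorem --- works here, and the value of $C$ is irrelevant.)

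I do not expect any real obstacle: the strip bound is routine and the limiting step is purely formal. The only point that needs a moment's attention is that the deterministic bound must hold uniformly over \emph{all} configurations, including the low-probability ``bad'' ones, so that it serves as a dominating function and bounded convergence applies with no integrability caveat --- and the strip argument supplies exactly such a uniform bound. If one wished to avoid invoking Theorem~1 (as in the original treatment, where the mean is handled first), one could instead show that $n \mapsto \mathbb{E}[L(X_1,\dots,X_n)]$ is approximately superadditive --- subdivide $[0,1]^2$ into a $k\times k$ grid of subsquares, concatenate optimal tours of the subproblems and stitch them together at a cost of $O(k)$, rescale, and let $n \to \infty$ and then $k \to \infty$ --- and combine this with the deterministic $O(\sqrt{n})$ bound to conclude that $\lim_n \mathbb{E}[Y_n]$ exists, after which identifying the limit with $\beta$ is a separate matter. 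But given that Theorem~1 is already available, the short argument above is the natural route.
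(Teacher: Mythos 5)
Your argument is correct. The paper itself offers no proof of this lemma---it is simply quoted from Beardwood--Halton--Hammersley---so there is nothing internal to compare against; but your derivation is a legitimate and complete way to obtain it from the stated Theorem~1. The two ingredients are exactly what is needed: the deterministic worst-case bound $L(x_1,\dots,x_n)\le C\sqrt{n}$ (your strip/snake construction gives, say, total length at most $n/m + 2m + O(1)$ with $m=\lceil\sqrt{n}\rceil$, hence $C=3$ works, and Few's theorem gives sharper constants) makes $Y_n = L(X_1,\dots,X_n)/\sqrt{n}$ uniformly bounded, and then almost sure convergence plus the bounded convergence theorem yields $\mathbb{E}[Y_n]\to\beta$ with no integrability caveat. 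The one thing worth flagging is logical order: in the original 1959 paper the convergence of the \emph{mean} is established first (essentially by the superadditivity-plus-stitching argument you sketch in your last paragraph, together with the same deterministic $O(\sqrt{n})$ bound), and the almost sure statement is the harder result derived afterwards; so your route inverts the historical dependency. That is harmless here, since the present paper states Theorem~1 as a black box before invoking Lemma~1, but if you were reconstructing the original development from scratch you would need the superadditivity route rather than an appeal to the a.s.\ limit.
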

Further, we can switch to a Poisson process with intensity $n$. Let $\mathcal{P}_n$ denote a Poisson process with intensity $n$ on $[0,1]^2$.
\begin{lemma}[\cite{Beardwood1959}]\label{lemma:poisson}
It holds that
\[\frac{\mathbb{E}\left[L(\mathcal{P}_n)\right]}{\sqrt{n}} \to \beta.\]
\end{lemma}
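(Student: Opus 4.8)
The plan is to Poissonize by conditioning on the (random) number of points. A rate‑$n$ Poisson process on the unit square $[0,1]^2$ has $N \sim \mathrm{Poisson}(n)$ points, and conditionally on $\{N = m\}$ these points are distributed as $X_1, \dots, X_m$ i.i.d.\ uniform on $[0,1]^2$. Writing $\ell_m := \mathbb{E}[L(X_1,\dots,X_m)]$ (with $\ell_0 = \ell_1 = 0$), this gives the exact identity
\[
\mathbb{E}[L(\mathcal{P}_n)] \;=\; \sum_{m=0}^{\infty} \mathbb{P}(N = m)\,\ell_m \;=\; \mathbb{E}\big[\ell_N\big].
\]
The goal is then to show $\mathbb{E}[\ell_N]/\sqrt{n} \to \beta$, using Lemma~\ref{lemma:expectation}, which tells us $\ell_m/\sqrt{m} \to \beta$, together with the concentration of $N$ around its mean $n$ (standard deviation $\sqrt{n}$, and exponential tail bounds $\mathbb{P}(|N - n| > t) \le 2\exp(-c t^2/n)$ for $t \le n$).

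I would record two ingredients. First, a \emph{deterministic} upper bound: any $m$ points in $[0,1]^2$ admit a tour of length at most $c_0\sqrt{m} + c_0$ for an absolute constant $c_0$; this follows from the classical boustrophedon construction (partition $[0,1]^2$ into $\lceil \sqrt{m}\,\rceil$ horizontal strips of height $\le 1/\sqrt{m}$ and traverse them left‑to‑right / right‑to‑left alternately, so the horizontal travel is $\le \sqrt{m}+1$ and the vertical travel is $\le m/\sqrt{m} + 1$). In particular $\ell_m \le c_0\sqrt{m} + c_0$ for every $m$. Second, by Lemma~\ref{lemma:expectation}, given $\varepsilon > 0$ there is $M = M(\varepsilon)$ with $|\ell_m - \beta\sqrt{m}| \le \varepsilon\sqrt{m}$ for all $m \ge M$.

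Now fix $\varepsilon > 0$ and set $I_n := \{ m \in \mathbb{Z}_{\ge 0} : |m - n| \le n^{2/3}\}$. Split
\[
\mathbb{E}[\ell_N] \;=\; \sum_{m \in I_n} \mathbb{P}(N = m)\,\ell_m \;+\; \sum_{m \notin I_n} \mathbb{P}(N = m)\,\ell_m .
\]
For the tail sum, the deterministic bound and Cauchy--Schwarz give
$\sum_{m \notin I_n}\mathbb{P}(N=m)\,\ell_m \le c_0\,\mathbb{E}\big[(\sqrt{N}+1)\mathbbm{1}\{N \notin I_n\}\big] \le c_0\big(\sqrt{\mathbb{E}[N]} + 1\big)\sqrt{\mathbb{P}(|N - n| > n^{2/3})}$, and since $\mathbb{P}(|N-n| > n^{2/3})$ decays faster than any power of $n$ this is $o(\sqrt{n})$. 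For the main sum, once $n$ is large enough every $m \in I_n$ exceeds $M$, so $\ell_m = \beta\sqrt{m} + O(\varepsilon)\sqrt{m}$, and uniformly over $m \in I_n$ we have $\sqrt{m} = \sqrt{n}\,(1 + O(n^{-1/3}))$; hence $\sum_{m \in I_n}\mathbb{P}(N=m)\,\ell_m = \big(\beta + O(\varepsilon) + o(1)\big)\sqrt{n}$. Dividing by $\sqrt{n}$, letting $n \to \infty$, and then $\varepsilon \downarrow 0$ yields $\mathbb{E}[L(\mathcal{P}_n)]/\sqrt{n} \to \beta$.

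The only genuinely delicate point is the transfer from the \emph{pointwise} convergence $\ell_m/\sqrt{m}\to\beta$ to a statement about the \emph{random} index $N$: this is precisely what forces the split into a main event and a tail event, with the crude deterministic bound $\ell_m \lesssim \sqrt{m}$ doing the work on the tail. Everything else is routine. (One could alternatively exploit monotonicity of $L$ under point insertion and an $L^1$ bound on $|N - n|$ in place of the Cauchy--Schwarz step, but the argument above is the cleanest.)
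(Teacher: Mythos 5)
The paper does not actually prove this lemma; it is stated with a citation to \cite{Beardwood1959} and used as a black box, so there is no in-paper argument to compare against. Your de-Poissonization proof is correct and is essentially the standard way to derive the Poisson statement from Lemma~\ref{lemma:expectation}: the conditional identity $\mathbb{E}[L(\mathcal{P}_n)]=\mathbb{E}[\ell_N]$ is exactly right, the boustrophedon bound $\ell_m\le c_0\sqrt{m}+c_0$ is the correct crude control on the tail, and the split over $I_n=\{m:|m-n|\le n^{2/3}\}$ together with the Poisson tail bound $\mathbb{P}(|N-n|>n^{2/3})\le 2e^{-cn^{1/3}}$ makes the exchange of the pointwise limit with the random index legitimate. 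The one step worth writing out a little more carefully is the Cauchy--Schwarz estimate on the tail: the clean statement is $\mathbb{E}\left[\sqrt{N}\,\mathbbm{1}\{N\notin I_n\}\right]\le\sqrt{\mathbb{E}[N]}\,\sqrt{\mathbb{P}(N\notin I_n)}$, and then $\mathbb{E}\left[\mathbbm{1}\{N\notin I_n\}\right]=\mathbb{P}(N\notin I_n)\le\sqrt{\mathbb{P}(N\notin I_n)}$, which together give the bound you assert; as written it reads as if Cauchy--Schwarz were applied to the sum $\sqrt{N}+1$ in one shot. This is cosmetic, not a gap. Your closing remark is also accurate: one could instead use monotonicity of the tour length under point insertion (adding a point to a tour costs at most twice the distance to the nearest tour point) plus $\mathbb{E}|N-n|=O(\sqrt{n})$, which is closer in spirit to the original Beardwood--Halton--Hammersley treatment, but your version is self-contained and correct.
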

\cite{Beardwood1959} gave the following lower bound on $\beta$.
\begin{theorem}[\cite{Beardwood1959}]
The value $\beta$ is lower bounded by $\frac{5}{8}$.
\end{theorem}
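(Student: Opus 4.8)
The plan is to combine a purely combinatorial lower bound on the tour length in terms of near-neighbour distances with an asymptotic estimate of the expected nearest- and second-nearest-neighbour distances of a uniform point, and then to pass to the limit using Lemma~\ref{lemma:expectation}. Fix $n \ge 3$, a realization of $X_1,\dots,X_n$, and an optimal tour; being a Hamiltonian cycle, the tour makes every $X_i$ incident to exactly two edges, whose lengths I denote $a_i \le b_i$. These two edges reach two \emph{distinct} points of $\{X_j : j \ne i\}$, so the two smallest distances from $X_i$ to the other points are at most $a_i$ and $b_i$ respectively; that is, $D_i^{(1)} \le a_i$ and $D_i^{(2)} \le b_i$, where $D_i^{(1)} \le D_i^{(2)}$ are the distances from $X_i$ to its nearest and second-nearest point among the others. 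Summing over $i$ and noting that every tour edge is counted once from each of its two endpoints,
\[
2\,L(X_1,\dots,X_n) = \sum_{i=1}^n (a_i + b_i) \ge \sum_{i=1}^n\bigl(D_i^{(1)} + D_i^{(2)}\bigr),
\]
so by exchangeability of $X_1,\dots,X_n$, $\;\mathbb{E}\bigl[L(X_1,\dots,X_n)\bigr] \ge \tfrac{n}{2}\bigl(\mathbb{E}[D_1^{(1)}] + \mathbb{E}[D_1^{(2)}]\bigr)$.

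Next I would estimate $\mathbb{E}[D_1^{(1)}]$ and $\mathbb{E}[D_1^{(2)}]$ from below. Conditioning on $X_1 = x$, the number of the remaining $n-1$ points within distance $t$ of $x$ is $\mathrm{Binomial}(n-1,p)$ with $p = p(x,t) = \mathrm{area}\bigl(B(x,t) \cap [0,1]^2\bigr) \le \pi t^2$. Hence for $0 \le t \le 1/\sqrt\pi$ and every $x$,
\[
\mathbb{P}\bigl(D_1^{(1)} > t \mid X_1 = x\bigr) = (1-p)^{n-1} \ge (1-\pi t^2)^{n-1},
\]
and, since $q \mapsto (1-q)^{n-1} + (n-1)q(1-q)^{n-2} = \mathbb{P}(\mathrm{Bin}(n-1,q)\le 1)$ is nonincreasing in $q$,
\[
\mathbb{P}\bigl(D_1^{(2)} > t \mid X_1 = x\bigr) \ge (1-\pi t^2)^{n-1} + (n-1)\pi t^2(1-\pi t^2)^{n-2}.
\]
Integrating these over $t \in [0,1/\sqrt\pi]$ (the bounds are uniform in $x$, so no further conditioning is needed, and discarding $t > 1/\sqrt\pi$ only weakens a lower bound), substituting $t = u/\sqrt n$, and letting $n \to \infty$ by dominated convergence — with envelope $(1-\pi u^2/n)^{n-1} \le e^{-\pi u^2/2}$ for $n \ge 2$ — gives
\[
\liminf_{n\to\infty}\sqrt n\,\mathbb{E}[D_1^{(1)}] \ge \int_0^\infty e^{-\pi u^2}\,du = \tfrac12, \qquad \liminf_{n\to\infty}\sqrt n\,\mathbb{E}[D_1^{(2)}] \ge \int_0^\infty (1+\pi u^2)e^{-\pi u^2}\,du = \tfrac34 .
\]
Combining the displays, $\liminf_{n\to\infty} \mathbb{E}[L(X_1,\dots,X_n)]/\sqrt n \ge \tfrac12\bigl(\tfrac12 + \tfrac34\bigr) = \tfrac58$, and Lemma~\ref{lemma:expectation} identifies this limit with $\beta$, so $\beta \ge \tfrac58$.

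The conceptual crux — and the reason this beats the bound $\beta \ge \tfrac12$ that the nearest-neighbour distance alone would yield — is the observation that the two tour-edges at a vertex land on \emph{distinct} points, so the second-nearest-neighbour distance can also be charged. Everything else is routine but mildly delicate bookkeeping: checking that truncating the disk $B(x,t)$ to the square only decreases $p$ (so boundary effects help, rather than hurt, a lower bound), verifying the monotonicity of the Binomial lower-tail probability in $p$, and justifying the interchange of limit and integral in the rescaled Gaussian integrals. An alternative is to run the same argument on the Poisson model of Lemma~\ref{lemma:poisson} via the Mecke/Palm formula, where the near-neighbour distributions are slightly cleaner; I would pick whichever version makes that interchange most transparent.
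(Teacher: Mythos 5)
Your proposal is correct and follows essentially the same approach as the paper's (sketched) proof: lower-bound the tour by charging each vertex half of its distances to its nearest and second-nearest neighbours, whose expectations scale as $\tfrac{1}{2\sqrt n}$ and $\tfrac{3}{4\sqrt n}$. The only difference is that you work directly in the binomial model and observe that boundary truncation only helps the lower bound, whereas the paper passes to a Poisson process on all of $\mathbb{R}^2$ to remove the boundary effect; your write-up supplies the dominated-convergence details that the paper's sketch omits.
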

\begin{proof} (Sketch)
We outline the proof given by \cite{Beardwood1959}, giving a lower bound on $\mathbb{E}\left[L(\mathcal{P}_n)\right]$. Observe that in a valid traveling salesman tour, every point is connected to exactly two other points. To lower bound, we can connect each point to its two closest points. We can further assume that the Poisson process is over all of $\mathbb{R}^2$, rather than just $[0,1]^2$, in order to remove the boundary effect. The expected distance of a point to its closest neighbor is shown to be $\frac{1}{2\sqrt{n}}$, and the expected distance to the next closes neighbor is shown to be $\frac{3}{4 \sqrt{n}}$. Each point contributes half the expected lengths to the closest two other points. Since the number of points is concentrated around $n$, it holds that $\beta \geq \frac{1}{2} \left( \frac{1}{2} + \frac{3}{4}\right)$.
\end{proof}

Certainly there is room to improve the lower bound. Observe that short cycles are likely to appear when we connect each point to the two closest other points. In \cite{Steinerberger2015}, the author gave an approach to identify situations in which $3$-cycles appear, and then lower-bounded the contribution of correcting these $3$-cycles. We outline the approach below.
\begin{enumerate}
\item For point $a$, let $r_1$ be the distance of $a$ to the closest point, and let $r_2$ be the distance to the next closest point. Let $E_a$ be the event that the third closest point is at a distance of $r_3 \geq r_1 + 2 r_2$.
\item The probability that $E_a$ occurs is calculated to be $\frac{7}{324}$ for a given point $a$. Therefore, the expected number of points satisfying this geometric property is $\frac{7}{324}n$, and the number of triples involved is at least $\frac{1}{3} \frac{7}{324}n$ in expectation.
\item Using the relationship $r_3 \geq r_1 + 2 r_2$, we can show that if $\{a,b,c,d\}$ satisfy the geometric property with $\Vert a - b\Vert = r_1$, $\Vert a - c \Vert = r_2$, and $\Vert a - d \Vert = r_3$, then the closest two points to $b$ are $a$ and $c$, and the closest two points to $c$ are $a$ and $b$. Therefore, the ``count the closest two distances'' method would create a triangle in this situation. 
\item To correct for the triangle, subtract the lengths coming from the triangle and add a lower bound on the new lengths. The triangle contribution is calculated to be at most $3(r_1 + r_2)$ and the new lengths are calculated to be at least $2 r_3$. Therefore, whenever the geometric property holds for a triplet of points, the calculated  contribution is $2r_3 - 3(r_1 + r_2)$.
\item The final adjustment is calculated to be $\frac{19}{5184}$.
\end{enumerate}

There are two errors in this analysis that are both due to inconsistency with counting edge lengths. If edge lengths are counted from the perspective of vertices, then the right thing to do would be to give each vertex two ``stubs.'' These stubs are connected to other vertices, and may form edges if there are agreements. A stub from vertex $a$ to vertex $b$ contributes $\frac{1}{2} \Vert a - b \Vert$ to the path length. In this way, a triangle comprises 6 stubs, and the contribution to the path length is the sum of the edge lengths. On page 35, the author writes $r_1 + r_2 + 2 \Vert a - c\Vert$ as the contribution of the triangle. This is probably a typo and likely $r_1 + r_2 + 2 \Vert b - c\Vert$ was meant instead. However, it should be $r_1 + r_2 +  \Vert b - c\Vert \leq 2 (r_1 + r_2)$. Fixing this error helps the analysis.

The next step is to redirect the six stubs, and determine their length contributions. We break edge $(b,c)$, which means we need to redirect two stubs, while the four stubs that comprise the edges $(a,b)$ and $(a,c)$ remain. The redirected stubs contribute $\frac{1}{2} \Vert b - d \Vert + \frac{1}{2} \Vert c - e \Vert$. The six stubs therefore yield an overall contribution of $\Vert a - b\Vert + \Vert a - c \Vert + \frac{1}{2} \Vert b - d \Vert + \frac{1}{2} \Vert c - e \Vert \geq r_1 + r_2 + \frac{1}{2} \left(r_3 - r_1\right) +  \frac{1}{2} \left(r_3 - r_2\right) = r_3 + \frac{1}{2} (r_1 + r_2)$. In the analysis above Figure 5 in the paper, the author includes the full lengths $\Vert b-d\Vert$ and $\Vert c - e \Vert$. The effect of this is to give points $d$ and $e$ a third stub each.

To summarize, the overall contribution for the triangle scenario, after breaking edges $(b,c)$, is $r_3 + \frac{1}{2} (r_1 + r_2) - 2 (r_1 + r_2) = r_3 - \frac{3}{2} r_1 - \frac{3}{2}r_2$. 


\section{Derivation of the Lower Bound}\label{sec:lower-bound}
In this section we use the approach of \cite{Steinerberger2015} to derive a lower bound on $\beta$.
\begin{theorem}\label{thm:first-lower-bound}
It holds that $\beta \geq \frac{5}{8} + \frac{19}{10368}$.
\end{theorem}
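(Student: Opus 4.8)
\emph{Proof plan.} By Lemma~\ref{lemma:poisson} it suffices to lower bound $\mathbb{E}[L(\mathcal{P}_n)]/\sqrt{n}$, and I would do this through the corrected version of the scheme reviewed above. Since every vertex of a Hamiltonian cycle has degree $2$, for the optimal tour
\[ L(\mathcal{P}_n) \;=\; \tfrac12 \sum_{v \in \mathcal{P}_n} \bigl( \text{distance from } v \text{ to its two tour-neighbors} \bigr) \;\ge\; \sum_{v \in \mathcal{P}_n} c(v), \]
where $c(v)$ is any quantity bounding half the sum of the distances from $v$ to its two tour-neighbors from below. The crudest choice, $c(v) = \tfrac12 s(v)$ with $s(v) = r_1(v) + r_2(v)$ the sum of the distances from $v$ to its two nearest neighbors in $\mathcal{P}_n$, already yields $\beta \ge \tfrac58$ — this is exactly the computation in the sketch of the $\tfrac58$ bound, carried out for a stationary Poisson process on all of $\mathbb{R}^2$ so that the boundary contributes only $o(\sqrt{n})$. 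The theorem comes from a sharper $c(v)$ at the vertices that the nearest-neighbor heuristic is forced to place into a triangle.

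For $a \in \mathcal{P}_n$ let $r_1 \le r_2 \le r_3$ be the distances to its three nearest neighbors $b,c,d$, and let $E_a = \{ r_3 \ge r_1 + 2 r_2\}$. On $E_a$ the triangle inequality gives $\Vert b - c\Vert \le r_1 + r_2 \le 2r_2$, while every point of $\mathcal{P}_n \setminus \{a,b,c\}$ is at distance $\ge r_3 - r_1 \ge 2r_2$ from $b$ and $\ge r_3 - r_2 \ge r_1 + r_2$ from $c$; hence every vertex of $T_a = \{a,b,c\}$ has its two nearest neighbors equal to the other two vertices of $T_a$, so the heuristic closes the triangle $T_a$, and $T_a$ is recovered from any one of its vertices — in particular distinct such triangles are vertex-disjoint. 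Since the optimal tour cannot contain all three edges of $T_a$, the case analysis sketched above (over which of the three edges is absent, using only that a tour-neighbor of $b$, resp.\ $c$, outside $T_a$ is at distance $\ge r_3 - r_1$, resp.\ $\ge r_3 - r_2$) shows that the total contribution of the vertices of $T_a$ exceeds $\tfrac12\sum_{v \in T_a} s(v)$ by at least $r_3 - \tfrac32 r_1 - \tfrac32 r_2 = \tfrac12 (r_2 - r_1) + (r_3 - r_1 - 2r_2) \ge 0$. Each (unordered) defect triangle arises from at most its three vertices as the apex $a$, so averaging the per-apex lower bound over the at most three apices of each triangle gives the rigorous bound
\[ L(\mathcal{P}_n) \;\ge\; \tfrac12 \sum_{v \in \mathcal{P}_n} s(v) \;+\; \tfrac13 \sum_{a \in \mathcal{P}_n} \mathbbm{1}_{E_a}\Bigl( r_3 - \tfrac32 r_1 - \tfrac32 r_2 \Bigr), \]
where nonnegativity of each summand on $E_a$ is what makes it legitimate to retain all of them (and removes any need to check that the implicit reroutings are jointly realizable).

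It remains to take expectations. The first sum contributes $\tfrac58$ in the limit, as before. For the second, the Mecke/Campbell formula gives $\mathbb{E}\bigl[\sum_a f(a,\mathcal{P}_n)\bigr] = n\int_{[0,1]^2}\mathbb{E}\bigl[f(x,\mathcal{P}_n\cup\{x\})\bigr]\,dx$, and since $f$ depends only on the three nearest neighbors of $x$ the boundary term is $o(\sqrt{n})$; for the process on all of $\mathbb{R}^2$ the rescaled squared distances $\pi n r_1^2 \le \pi n r_2^2 \le \pi n r_3^2$ are distributed as the first three points $u_1 \le u_2 \le u_3$ of a unit-rate Poisson process on $[0,\infty)$, with joint density $e^{-u_3}$. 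Writing $r_i = \sqrt{u_i/(\pi n)}$ and dividing by $\sqrt{n}$ turns the added term into
\[ \frac{1}{3\sqrt{\pi}} \int_{0 \le u_1 \le u_2 \le u_3} \mathbbm{1}\!\left[ \sqrt{u_3} \ge \sqrt{u_1} + 2\sqrt{u_2} \right]\left( \sqrt{u_3} - \tfrac32\sqrt{u_1} - \tfrac32\sqrt{u_2}\right) e^{-u_3}\, du_1\, du_2\, du_3, \]
which after the substitution $u_i = v_i^2$ reduces to elementary Gaussian and error-function integrals; I would evaluate it to $\tfrac{19}{10368}$ — exactly half of the $\tfrac{19}{5184}$ of \cite{Steinerberger2015}, as forced by the corrected per-triangle weight being half of theirs, with the same computation against the constraint $\mathbbm{1}[\sqrt{u_3}\ge\sqrt{u_1}+2\sqrt{u_2}]$ reproducing the probability $\tfrac{7}{324}$ as a consistency check. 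Adding the two limits gives $\beta \ge \tfrac58 + \tfrac{19}{10368}$.

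\textbf{Main obstacle.} Two points need care. First, the bookkeeping behind the per-triangle correction: one must verify that the defect triangles are genuinely vertex-disjoint, that each triangle has at most three apices, and that the case analysis over the missing edge always delivers at least $r_3 - \tfrac32 r_1 - \tfrac32 r_2$ — this is precisely the ``stub-counting'' that was mishandled in \cite{Steinerberger2015}, and its nonnegativity on $E_a$ is what lets the corrections be summed cleanly. Second, the explicit evaluation of the constrained three-dimensional integral; the Poissonization and boundary-removal steps are routine given Lemma~\ref{lemma:poisson} and the concentration already invoked for the $\tfrac58$ bound.
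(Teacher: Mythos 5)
Your proposal is correct and follows essentially the same route as the paper: the same event $E_a = \{r_3 \ge r_1 + 2r_2\}$, the same triangle-detection argument, the same case analysis over which triangle edge the tour omits yielding the per-triangle correction $r_3 - \tfrac32 r_1 - \tfrac32 r_2$, the same factor $\tfrac13$, and the same integral (yours is the paper's after the substitution $u_i = \pi n r_i^2$), whose value $\tfrac{19}{10368}$ the paper computes explicitly in its Lemma~\ref{lemma:integral}. Your added bookkeeping (vertex-disjointness of the defect triangles, nonnegativity of the correction on $E_a$, and the Mecke-formula justification of the expectation step) is sound and only makes explicit what the paper leaves implicit.
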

The proof of Theorem \ref{thm:first-lower-bound} requires Lemmas \ref{lemma:density} and  \ref{lemma:integral}.
\begin{lemma}[Lemma 4 in \cite{Steinerberger2015}]\label{lemma:density}
Let $\mathcal{P}_n$ be a Poisson point process on $\mathbb{R}^2$ with intensity $n$. Then for any fixed point $p \in \mathbb{R}^2$, the probability distribution of the distance between $p$ and the the three closest points to $p$ is given by
\begin{align*}
h(r_1,r_2,r_3) &= \begin{cases}
e^{-n \pi r_3^3} (2n\pi)^3 r_1 r_2 r_3 & \text{if } r_1 < r_2 < r_3\\
0 & \text{otherwise.}
\end{cases}
\end{align*}
\end{lemma}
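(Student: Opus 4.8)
The plan is to compute the joint density of the ordered distances $r_1 < r_2 < r_3$ from the fixed point $p$ to its three nearest neighbors directly from the defining property of the Poisson process: the number of points of $\mathcal{P}_n$ falling in disjoint regions are independent Poisson random variables with mean equal to $n$ times the area of the region. I would work with the disk $B(p,r)$ centered at $p$ of radius $r$, which has area $\pi r^2$, so the number of points of $\mathcal{P}_n$ in $B(p,r)$ is Poisson with mean $n\pi r^2$.

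First I would set up the event in terms of infinitesimal annuli. For $0 < r_1 < r_2 < r_3$, the event that the three nearest neighbors lie at distances in $[r_1, r_1+dr_1]$, $[r_2, r_2+dr_2]$, $[r_3, r_3+dr_3]$ respectively is (up to $o$ of the product of the increments) the event that: (i) the open disk $B(p,r_1)$ is empty; (ii) exactly one point lies in the annulus between radii $r_1$ and $r_1+dr_1$; (iii) the annulus between $r_1+dr_1$ and $r_2$ is empty; (iv) exactly one point lies in the annulus between $r_2$ and $r_2+dr_2$; (v) the annulus between $r_2+dr_2$ and $r_3$ is empty; (vi) exactly one point lies in the annulus between $r_3$ and $r_3+dr_3$. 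I would then use independence over these disjoint regions. The "empty region" events contribute, in the limit, $e^{-n\pi r_1^2}\cdot e^{-n\pi(r_2^2 - r_1^2)} \cdot e^{-n\pi(r_3^2 - r_2^2)} = e^{-n\pi r_3^2}$, since the exponents telescope. Each "exactly one point in a thin annulus of radius $r_i$ and width $dr_i$" event has probability $(2n\pi r_i\, dr_i)e^{-2n\pi r_i\, dr_i} = 2n\pi r_i\, dr_i + o(dr_i)$, using that the annulus has area $2\pi r_i\, dr_i + o(dr_i)$. Multiplying the three such factors gives $(2n\pi)^3 r_1 r_2 r_3\, dr_1\, dr_2\, dr_3$. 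Dividing by $dr_1\, dr_2\, dr_3$ and taking the limit yields the claimed density $h(r_1,r_2,r_3) = e^{-n\pi r_3^2}(2n\pi)^3 r_1 r_2 r_3$ on the region $r_1 < r_2 < r_3$, and $0$ otherwise since distances are ordered.

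Note: the excerpt as typeset has $e^{-n\pi r_3^3}$, but the exponent should be $r_3^2$ (area of a disk), and this is what the derivation produces; I would state the lemma with $r_3^2$. As a sanity check I would verify $\int_{0<r_1<r_2<r_3} h\, dr_1\, dr_2\, dr_3 = 1$: integrating out $r_1$ on $(0,r_2)$ gives a factor $r_2^2/2$, then integrating $r_2$ on $(0,r_3)$ gives a factor $r_3^4/8$, leaving $(2n\pi)^3 e^{-n\pi r_3^2} r_3^5/16$; substituting $u = n\pi r_3^2$, $du = 2n\pi r_3\, dr_3$ turns this into $\tfrac{1}{2}\int_0^\infty u^2 e^{-u}\, du = 1$, confirming normalization.

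The main obstacle — such as it is — is purely bookkeeping: one must be careful that the "empty" regions and the three thin annuli are genuinely disjoint (which holds for $r_1 < r_2 < r_3$ once the increments are small), so that the Poisson independence property applies, and one must track that the cross-terms and higher-order annulus occupancy probabilities are $o(dr_1\, dr_2\, dr_3)$ and hence vanish in the limit. There is no analytic difficulty beyond this; the result is essentially the standard order-statistics computation for a planar Poisson process, and the telescoping of the exponents is what makes the final form clean.
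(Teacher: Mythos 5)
The paper gives no proof of this lemma at all---it is imported verbatim as Lemma~4 of \cite{Steinerberger2015}---so there is nothing in-paper to compare against; your infinitesimal-annulus derivation is the standard, correct argument, and you are right that the exponent $e^{-n\pi r_3^3}$ in the statement is a typo for $e^{-n\pi r_3^2}$, which is the form the paper itself uses in Lemma~\ref{lemma:integral} and in the proof of Theorem~\ref{thm:first-lower-bound}. One trivial slip in your sanity check: after integrating out $r_1$ and $r_2$ the remaining integrand is $(2n\pi)^3 e^{-n\pi r_3^2} r_3^5/8$, not $r_3^5/16$; with the factor $1/8$ the substitution $u = n\pi r_3^2$ does give $\tfrac{1}{2}\int_0^\infty u^2 e^{-u}\,du = 1$ as you conclude.
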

\begin{lemma}\label{lemma:integral}
\begin{align*}
\int_{r_1 = 0}^{\infty} \int_{r_2 = r_1}^{\infty} \int_{r_3 = r_1 + 2 r_2}^{\infty} \left(r_3 - \frac{3}{2}r_1 - \frac{3}{2}r_2 \right) e^{-n \pi r_3^2} r_1 r_2 r_3 dr_3 dr_2 dr_1&= \frac{19}{27648 \pi^3 n^{\frac{7}{2}}} 
\end{align*}
\end{lemma}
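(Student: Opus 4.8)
The plan is to evaluate the integral by carrying out the $r_3$ integration last. Note first that on the region of integration we have $r_3 \ge r_1 + 2r_2 > \tfrac32 r_1 + \tfrac32 r_2$ (using $r_2 > r_1$), so the integrand is nonnegative and Tonelli's theorem permits us to reorder the integrals. I would put $r_3$ outermost: for a fixed value of $r_3$, the constraints $0 < r_1 < r_2$ and $r_1 + 2r_2 < r_3$ describe the triangle $0 < r_1 < r_3/3$, $\ r_1 < r_2 < (r_3 - r_1)/2$ (the upper bound on $r_1$ being exactly the condition that this interval for $r_2$ is nonempty).

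Next I would exploit homogeneity. Substituting $r_1 = r_3 u$, $r_2 = r_3 v$ in the inner double integral and extracting powers of $r_3$, the inner integral equals $C\, r_3^5$, where
\[ C = \int_{0}^{1/3}\int_{u}^{(1-u)/2} \left(1 - \tfrac32 u - \tfrac32 v\right) u v \, dv\, du \]
is a dimensionless constant. Hence the whole expression equals $C\displaystyle\int_0^\infty r_3^6 e^{-n\pi r_3^2}\, dr_3 = C\cdot \tfrac{15\sqrt{\pi}}{16}\,(n\pi)^{-7/2} = C \cdot \tfrac{15}{16\pi^3 n^{7/2}}$, using the standard Gaussian moment $\int_0^\infty r^6 e^{-cr^2}\,dr = \tfrac{15\sqrt\pi}{16}\, c^{-7/2}$. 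Since $\tfrac{15}{16}\cdot\tfrac{1}{27648}\cdot 27648 = \tfrac{15}{16}$ and $\tfrac{19}{25920}\cdot\tfrac{15}{16} = \tfrac{19}{27648}$, it remains only to verify that $C = \tfrac{19}{25920}$.

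For this, the inner integral over $v$ is elementary. Writing $w = (1-u)/2$, one has $w^2 - u^2 = \tfrac14(1-3u)(1+u)$ and $w^3 - u^3 = \tfrac18(1-3u)(1+3u^2)$ — the common factor $(1-3u)$ appears because $w = u$ precisely at $u = 1/3$ — and after collecting terms the $v$-integral simplifies to $\tfrac{1}{16}\, u\,(1-3u)^2\,(1+2u)$. Then
\[ C = \frac{1}{16}\int_0^{1/3} u(1-3u)^2(1+2u)\, du = \frac{1}{16}\int_0^{1/3}\bigl(u - 4u^2 - 3u^3 + 18u^4\bigr)\, du = \frac{1}{16}\cdot\frac{19}{1620} = \frac{19}{25920}, \]
which completes the computation.

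The only genuine work is the polynomial bookkeeping for $C$; I expect that step to be the main (though minor) obstacle, and the factorizations recorded above keep the arithmetic short and easy to check. One should also take care that the change of order of integration is justified, which is why I begin by checking that the integrand is nonnegative on the stated region.
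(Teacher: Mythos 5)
Your proposal is correct and follows essentially the same route as the paper's own proof: both reorder the integration so that $r_3$ is outermost over the region $0<r_1<r_3/3$, $r_1<r_2<(r_3-r_1)/2$, reduce the inner double integral to the constant $\tfrac{19}{25920}$ times $r_3^5$, and finish with the Gaussian moment $\int_0^\infty r_3^6 e^{-n\pi r_3^2}\,dr_3=\tfrac{15}{16\pi^3 n^{7/2}}$. Your explicit homogeneity substitution and the factorization $u(1-3u)^2(1+2u)$ are a tidier way of organizing the same polynomial bookkeeping the paper does directly.
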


\begin{proof}[Proof of Theorem \ref{thm:first-lower-bound}]
First we verify that the lower bound from breaking edge $(b,c)$ is valid. If edge $(a,b)$ is broken instead, the new stub lengths are $\Vert a - c \Vert + \Vert b - c \Vert + \frac{1}{2} \Vert a - d \Vert + \frac{1}{2} \Vert b - e \Vert$. The difference after subtracting the original stub lengths is then equal to 
\vspace{-6pt}
\begin{align*}
\Vert a - c \Vert + \Vert b - c \Vert + \frac{1}{2} \Vert a - d \Vert + \frac{1}{2} \Vert b - e \Vert - \left(\Vert a - c \Vert + \Vert b - c \Vert + \Vert a - b\Vert \right) &=  \frac{1}{2} \Vert a - d \Vert + \frac{1}{2} \Vert b - e \Vert -  \Vert a - b\Vert \\
&\geq \frac{1}{2} r_3 + \frac{1}{2} \left( \Vert a - e \Vert - \Vert a - b \Vert \right) - r_1\\
&\geq \frac{1}{2} r_3 + \frac{1}{2} \left( r_3 - r_1 \right) - r_1\\
&= r_3 - \frac{3}{2}r_1
\end{align*}
Similarly, if edge $(a,c)$ is broken, the contribution is lower bounded by $r_3 - \frac{3}{2} r_2$. Since $r_3 - \frac{3}{2} r_1 - \frac{3}{2} r_2 \leq r_3 - \frac{3}{2} r_2 \leq r_3 - \frac{3}{2} r_1$, we conclude that $r_3 - \frac{3}{2} r_1 - \frac{3}{2} r_1$ from breaking edge $(b,c)$ is a valid lower bound.

Therefore, from the discussion in Section \ref{sec:approaches} and Lemma \ref{lemma:density} we adjust the integral in \cite{Steinerberger2015} to give
\begin{align*}
\beta \geq \frac{5}{8} + \frac{\sqrt{n}}{3} \int_{r_1 = 0}^{\infty} \int_{r_2 = r_1}^{\infty} \int_{r_3 = r_1 + 2r_2}^{\infty} \left(r_3 - \frac{3}{2}r_1 - \frac{3}{2}r_2 \right) e^{-n \pi r_3^2} (2n\pi)^3 r_1 r_2 r_3 dr_3 dr_2 dr_1
\end{align*}
From Lemma \ref{lemma:integral}, 
\[\beta \geq \frac{5}{8} + \frac{\sqrt{n}}{3} (2n\pi)^3 \frac{19}{27648 \pi^3 n^{\frac{7}{2}}} = \frac{5}{8} + \frac{19}{10 368} \approx 0.626833.\]
\end{proof}

\section{An Improvement}\label{sec:improvement}
In this section, we improve upon the bound in Section \ref{sec:lower-bound} by tightening the triangle inequality.
\begin{theorem}\label{thm:second-lower-bound}
It holds that \begin{align*}
\beta &\geq \frac{5}{8} + \frac{1}{2} \left(\frac{19}{10368}\right) + \frac{1}{2}\left(\frac{3072 \sqrt{2} - 4325}{5376}\right)\\
&\geq 0.6277.
\end{align*}
\end{theorem}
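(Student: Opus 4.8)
The plan is to revisit the triangle-correction analysis from Section~\ref{sec:lower-bound} and exploit the fact that the crude bound $\|b-c\| \le r_1 + r_2$ (from the triangle inequality applied to the path $b\to a\to c$) is wasteful: since $\|a-b\|=r_1$ and $\|a-c\|=r_2$ with $b,c$ both close to $a$, the segment $bc$ is typically much shorter than $r_1+r_2$, and in the worst case (colinear, $b$ and $c$ on opposite sides of $a$) it equals $r_1+r_2$, while the ``good'' case puts them at distance $|r_2-r_1|$. The improvement should come from replacing the deterministic bound $\|b-c\|\le r_1+r_2$ by an \emph{averaged} bound, integrating over the angular position of $b$ and $c$ around $a$, or by splitting the configuration space according to the angle $\theta=\angle bac$ and using $\|b-c\|^2 = r_1^2 + r_2^2 - 2r_1 r_2 \cos\theta$.

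Concretely, I would proceed as follows. First, re-derive the per-triple contribution keeping $\|b-c\|$ symbolic: breaking edge $(b,c)$ yields a contribution of $r_3 + \tfrac12(r_1+r_2) - \big(r_1 + r_2 + \|b-c\|\big) = r_3 - \tfrac12(r_1+r_2) - \|b-c\|$, which replaces the earlier $r_3 - \tfrac32 r_1 - \tfrac32 r_2$ (the latter is this expression with $\|b-c\|$ bounded by $r_1+r_2$). Second, augment Lemma~\ref{lemma:density} with the joint density of $(r_1, r_2, r_3, \theta)$ where $\theta$ is the angle $\angle bac$: conditioned on the radii, by rotational symmetry $b$ and $c$ are uniformly and independently distributed in angle, so $\theta$ has a known density on $[0,\pi]$ (for the relative angle, density $\tfrac{1}{\pi}$ on $[0,\pi]$ after folding). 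Third, substitute $\|b-c\| = \sqrt{r_1^2 + r_2^2 - 2 r_1 r_2 \cos\theta}$ and redo the integral
\[
\frac{\sqrt n}{3}\int \left(r_3 - \tfrac12(r_1+r_2) - \sqrt{r_1^2+r_2^2-2r_1r_2\cos\theta}\,\right) e^{-n\pi r_3^2}(2n\pi)^3 r_1 r_2 r_3 \,\frac{d\theta}{\pi}\, dr_3\, dr_2\, dr_1,
\]
over the region $0<r_1<r_2$, $r_3 > r_1 + 2r_2$, $\theta \in [0,\pi]$. One subtlety: the event $E_a$ and the conclusion that $b,c$ have their two nearest neighbors among $\{a,b,c\}$ must still hold for \emph{all} $\theta$; since that argument only used $r_3 \ge r_1 + 2r_2$ (a radial condition) it is unaffected by introducing $\theta$, so the region of integration does not shrink. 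The final bound is then obtained by writing the gain as $\tfrac12$ of the old gain $\tfrac{19}{10368}$ plus $\tfrac12$ of the extra term, the $\tfrac12$ splitting presumably arising from averaging the new (angle-dependent) estimate against the old one, or from a convexity/symmetrization step; matching the claimed constant $\tfrac{3072\sqrt2 - 4325}{5376}$ will pin down exactly how the two estimates are combined.

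The main obstacle I anticipate is the angular integral $\int_0^\pi \sqrt{r_1^2 + r_2^2 - 2r_1 r_2\cos\theta}\, d\theta$, which is a complete elliptic integral of the second kind and does not have an elementary closed form as a function of $r_1, r_2$ alone; the appearance of $\sqrt 2$ in the answer suggests that instead of integrating $\theta$ over all of $[0,\pi]$ one uses a piecewise-linear lower bound on $\|b-c\|$ (e.g. $\|b-c\| \ge \tfrac{1}{\sqrt2}(r_1+r_2)$ when $\theta \ge \pi/2$, which is tight at $\theta=\pi/2$, combined with $\|b-c\| \le r_1+r_2$ for $\theta<\pi/2$), thereby getting a bound of the form (fraction without $\theta$) plus (fraction with a $\sqrt2$) — and the delicate part is choosing the threshold and the linear bounds so that the resulting elementary integral both is a valid lower bound and yields the stated constant. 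A secondary concern is bookkeeping: ensuring the ``break $(b,c)$ is the worst of the three break choices'' argument from the proof of Theorem~\ref{thm:first-lower-bound} still goes through with the sharper $\|b-c\|$-dependent contributions, i.e. verifying $r_3 - \tfrac12(r_1+r_2) - \|b-c\| \le \min\{r_3 - \tfrac32 r_1 + (\text{correction}),\, \ldots\}$ under the appropriate geometry, so that breaking $(b,c)$ remains a legitimate lower bound on the true correction.
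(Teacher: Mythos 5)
Your core idea matches the paper's: condition on which side of the perpendicular to $ab$ through $a$ the point $c$ lands (equivalently, whether $\theta=\angle bac$ is acute or obtuse), use the sharper bound $\Vert b-c\Vert \le \sqrt{r_1^2+r_2^2}$ in the acute case, fall back to $\Vert b-c\Vert\le r_1+r_2$ (hence the old gain $\tfrac{19}{10368}$) in the obtuse case, and average the two cases with weight $\tfrac12$ each --- that is exactly where the two $\tfrac12$'s come from. Your instinct to avoid the elliptic integral by taking a worst case on each angular half is also the right one, though the specific piecewise bounds you float are inverted: since $\Vert b-c\Vert$ is \emph{subtracted}, you need an upper bound on it, so a lower bound like $\Vert b-c\Vert\ge\tfrac{1}{\sqrt2}(r_1+r_2)$ is of no use; the $\sqrt2$ in the final constant actually enters through the integration limit $r_1\le r_3/(1+\sqrt2)$ and the $(r_1^2+r_2^2)^{3/2}$ antiderivative, not through an angular threshold.

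The genuine gap is your assertion that ``the region of integration does not shrink'' and that one can keep the event $r_3\ge r_1+2r_2$. If you only sharpen the integrand to $r_3-\tfrac12 r_1-\tfrac12 r_2-\sqrt{r_1^2+r_2^2}$ but integrate over the old region, you obtain a valid but strictly weaker bound that does not produce the stated constant $\tfrac{3072\sqrt2-4325}{5376}$. The paper simultaneously \emph{enlarges} the qualifying event in the acute case to $r_3\ge r_2+\sqrt{r_1^2+r_2^2}$, which is a weaker requirement than $r_3\ge r_1+2r_2$ (because $\sqrt{r_1^2+r_2^2}\le r_1+r_2$), so that more triples contribute; this enlargement is where most of the numerical gain comes from, and it requires a new geometric proposition --- proved in the paper --- that under this weaker radial condition the two nearest neighbors of each of $a$, $b$, $c$ still lie inside $\{a,b,c\}$ whenever $c$ is in the acute half-plane, so the nearest-two-neighbors construction still creates the triangle being corrected. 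Without identifying this enlarged event and proving that proposition, the computation cannot reach the claimed value. Your secondary concern, that breaking $(b,c)$ remains the smallest of the three break choices, does go through and is checked in the paper: $r_3-\tfrac12 r_1-\tfrac12 r_2-\sqrt{r_1^2+r_2^2}\le r_3-\tfrac32 r_2\le r_3-\tfrac32 r_1$ since $\sqrt{r_1^2+r_2^2}\ge r_2$.
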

\begin{proof}
Place a Cartesian grid so that point $a$ is at the origin and point $b$ is at $(r_1, 0)$. Then with probability $\frac{1}{2}$, point $c$ falls into the first or fourth quadrant, and with probability $\frac{1}{2}$, point $c$ falls into the second or third quadrant. Conditioned on point $c$ falling into the first or fourth quadrant, the maximum length of $\Vert b - c\Vert$ is $\sqrt{r_1^2 + r_2^2}$. Conditioned on point $c$ falling into the second or third quadrant, the maximum length of $\Vert b - c\Vert$ is $r_1 + r_2$, which corresponds to the computation in Section \ref{sec:lower-bound}.

Conditioned on point $c$ falling into the first or fourth coordinate, the length contribution from breaking edge $(b,c)$ is at least $r_3 + \frac{1}{2} \left( r_1 + r_2\right) - \left(r_1 + r_2 + \sqrt{r_1^2 + r_2^2}\right) = r_3 - \frac{1}{2} r_1  - \frac{1}{2} r_2 - \sqrt{r_1^2 + r_2^2}$. If edge $(a,b)$ is broken instead, the new stub lengths are $\Vert a - c \Vert + \Vert b - c \Vert + \frac{1}{2} \Vert a - d \Vert + \frac{1}{2} \Vert b - e \Vert$. The difference after subtracting the original stub lengths is then equal to 
\vspace{-6pt}
\begin{align*}
\Vert a - c \Vert + \Vert b - c \Vert + \frac{1}{2} \Vert a - d \Vert + \frac{1}{2} \Vert b - e \Vert - \left(\Vert a - c \Vert + \Vert b - c \Vert + \Vert a - b\Vert \right) &=  \frac{1}{2} \Vert a - d \Vert + \frac{1}{2} \Vert b - e \Vert -  \Vert a - b\Vert \\
&\geq \frac{1}{2} r_3 + \frac{1}{2} \left( \Vert a - e \Vert - \Vert a - b \Vert \right) - r_1\\
&\geq \frac{1}{2} r_3 + \frac{1}{2} \left( r_3 - r_1 \right) - r_1\\
&= r_3 - \frac{3}{2}r_1
\end{align*}

\vspace{-6pt}
Similarly, if edge $(a,c)$ is broken, the contribution is lower bounded by $r_3 - \frac{3}{2} r_2$. Since $r_3 - \frac{1}{2} r_1 - \frac{1}{2}r_2 - \sqrt{r_1^2 + r_2^2} \leq r_3 - \frac{3}{2} r_2 \leq r_3 - \frac{3}{2} r_1$, we conclude that $r_3 - \frac{1}{2} r_1 - \frac{1}{2}r_2 - \sqrt{r_1^2 + r_2^2}$ from breaking edge $(b,c)$ is a valid lower bound.

We therefore break edge $(b,c)$. 
\begin{proposition}
If $r_3 \geq r_2 + \sqrt{r_1^2 + r_2^2}$, then the closest points to each of $a,b,c$ are the other two points in the set $\{a,b,c\}$, whenever point $b$ is in the first or fourth quadrant. 
\end{proposition}
\begin{proof}
Point $a$: $d(a,b) = r_1$, $d(a,c) = r_2$, and for any $d \notin\{a,b,c\}$, it holds that $d(a,d) \geq r_3 \geq r_2 + \sqrt{r_1^2 + r_2^2}$. Therefore $d(a,d) \geq d(a,b)$ and $d(a,d) \geq d(a,c)$.\\
Point $b$: $d(a,b) = r_1$, $d(b,c) \leq \sqrt{r_1^2 + r_2^2}$, and for any $d \notin\{a,b,c\}$, it holds that $d(b,d) \geq d(a,d) - d(a,b) \geq r_2 + \sqrt{r_1^2 + r_2^2} - r_1$. Therefore $d(b,d) \geq d(a,b)$ and $d(b,d) \geq d(b,c)$.\\
Point $c$: $d(a,c) = r_2$, $d(b,c) \leq \sqrt{r_1^2 + r_2^2}$, and for any $d \notin\{a,b,c\}$, it holds that $d(c,d) \geq d(a,d) - d(a,c) \geq r_2 + \sqrt{r_1^2 + r_2^2} - r_2 = \sqrt{r_1^2 + r_2^2}$. Therefore $d(c,d) \geq d(a,c)$ and $d(c,d) \geq d(b,c)$.
\end{proof}

The lower bound on $\beta$ is therefore
\begin{align*}
\frac{5}{8} + \frac{\sqrt{n}}{3} \int_{r_1 = 0}^{\infty} \int_{r_2 = r_1}^{\infty} \int_{r_3 = r_2 + \sqrt{r_1^2 + r_2^2}}^{\infty} \left(r_3 - \frac{1}{2}r_1 - \frac{1}{2}r_2 - \sqrt{r_1^2 + r_2^2} \right) e^{-n \pi r_3^2} (2n\pi)^3 r_1 r_2 r_3 dr_3 dr_2 dr_1.
\end{align*}
\begin{lemma}\label{lemma:second-integral}
It holds that
\small
\begin{align*}
&\int_{r_1 = 0}^{\infty} \int_{r_2 = r_1}^{\infty} \int_{r_3 = r_2 + \sqrt{r_1^2 + r_2^2}}^{\infty} \left(r_3 - \frac{1}{2}r_1 - \frac{1}{2}r_2 - \sqrt{r_1^2 + r_2^2} \right) e^{-n \pi r_3^2}  r_1 r_2 r_3 dr_3 dr_2 dr_1 \\
&=  \left[- \frac{\left(\frac{1}{1+\sqrt{2}} \right)^8}{8 \cdot 48} - \frac{\left(\frac{1}{1+\sqrt{2}} \right)^7}{7 \cdot 16} - \frac{\left(\frac{1}{1+\sqrt{2}} \right)^6}{6 \cdot 16} + \frac{1}{5}\left(\frac{1}{8} + \frac{1}{4} + \frac{1}{6} + \frac{2^{\frac{3}{2}}}{3} \right)\left(\frac{1}{1+\sqrt{2}} \right)^5 - \frac{13\left(\frac{1}{1+\sqrt{2}} \right)^4 }{64} - \frac{\left(\frac{1}{1+\sqrt{2}} \right)^3}{48} + \frac{\left(\frac{1}{1+\sqrt{2}} \right)^2}{32}  \right] \frac{15}{16\pi^3 n^{\frac{7}{2}}}.
\end{align*}
\normalsize
\end{lemma}

Multiplying the value of the integral in Lemma \ref{lemma:second-integral} by $\frac{\sqrt{n}(2 n \pi)^3}{3}$, we obtain the following lower bound.
\begin{align*}
&\frac{5}{8} +  \frac{5}{2} \left[- \frac{\left(\frac{1}{1+\sqrt{2}} \right)^8}{8 \cdot 48} - \frac{\left(\frac{1}{1+\sqrt{2}} \right)^7}{7 \cdot 16} - \frac{\left(\frac{1}{1+\sqrt{2}} \right)^6}{6 \cdot 16} + \frac{1}{5}\left(\frac{1}{8} + \frac{1}{4} + \frac{1}{6} + \frac{2^{\frac{3}{2}}}{3} \right)\left(\frac{1}{1+\sqrt{2}} \right)^5 - \frac{13\left(\frac{r_3}{1+\sqrt{2}} \right)^4 }{64} - \frac{\left(\frac{1}{1+\sqrt{2}} \right)^3}{48} + \frac{\left(\frac{1}{1+\sqrt{2}} \right)^2}{32}  \right] \\
&= \frac{5}{8} +  \frac{3072 \sqrt{2} - 4325}{5376}\\
&\approx \frac{5}{8} + 0.003621
\end{align*}
Finally, conditioning on the quadrant, the overall lower bound is
\begin{align*}
\beta \geq \frac{5}{8} + \frac{1}{2} \left(\frac{19}{10368}\right) + \frac{1}{2}\left(\frac{3072 \sqrt{2} - 4325}{5376}\right) \geq 0.6277
\end{align*}
\end{proof}

\section*{Appendix}
\begin{proof}[Proof of Lemma \ref{lemma:integral}]
We can change the order of integration to compute the integral more easily. 
\begin{align*}
&\int_{r_1 = 0}^{\infty} \int_{r_2 = r_1}^{\infty} \int_{r_3 = r_1 + 2 r_2}^{\infty} \left(r_3 - \frac{3}{2}r_1 - \frac{3}{2}r_2 \right) e^{-n \pi r_3^2} r_1 r_2 r_3 dr_3 dr_2 dr_1\\
&=\int_{r_3 = 0}^{\infty} \int_{r_1 = 0}^{\frac{r_3}{3}} \int_{r_2 = r_1}^{\frac{r_3 - r_1}{2}} \left(r_3 - \frac{3}{2}r_1 - \frac{3}{2}r_2 \right) e^{-n \pi r_3^2} r_1 r_2 r_3 dr_2 dr_1 dr_3\\
&=\int_{r_3 = 0}^{\infty}r_3 e^{-n \pi r_3^2} \int_{r_1 = 0}^{\frac{r_3}{3}} r_1 \int_{r_2 = r_1}^{\frac{r_3 - r_1}{2}} r_2 \left(r_3 - \frac{3}{2}r_1 - \frac{3}{2}r_2 \right)  dr_2 dr_1 dr_3\\
&=\int_{r_3 = 0}^{\infty} r_3 e^{-n \pi r_3^2}\int_{r_1 = 0}^{\frac{r_3}{3}}r_1 \left( \frac{r_2^2}{2} \left(r_3 -\frac{3}{2}r_1\right) - \frac{1}{2} r_2^3 \right) \Big |_{r_2=r_1}^{\frac{r_3-r_1}{2}} dr_1 dr_3\\
&=\int_{r_3 = 0}^{\infty} r_3 e^{-n \pi r_3^2}\int_{r_1 = 0}^{\frac{r_3}{3}}r_1 \left( \frac{\left(\frac{r_3-r_1}{2}\right)^2 - r_1^2}{2} \left(r_3 -\frac{3}{2}r_1\right) - \frac{1}{2} \left( \left(\frac{r_3-r_1}{2}\right)^3 - r_1^3 \right) \right)  dr_1 dr_3\\
&=\int_{r_3 = 0}^{\infty} r_3 e^{-n \pi r_3^2}\int_{r_1 = 0}^{\frac{r_3}{3}} \left(\frac{9 r_1^4}{8} - \frac{3r_1^3 r_3}{16} - \frac{r_1^2 r_3^2}{4} + \frac{r_1 r_3^3}{16} \right)  dr_1 dr_3\\
&=\int_{r_3 = 0}^{\infty} r_3 e^{-n \pi r_3^2} \left(\frac{9 r_1^5}{40} - \frac{3r_1^4 r_3}{64} - \frac{r_1^3 r_3^2}{12} + \frac{r_1^2 r_3^3}{32} \right) \Big |_{r_1 = 0}^{\frac{r_3}{3}}  dr_3\\
&=\int_{r_3 = 0}^{\infty} r_3 e^{-n \pi r_3^2} \left(\frac{9 \left(\frac{r_3}{3}\right)^5}{40} - \frac{3\left(\frac{r_3}{3}\right)^4 r_3}{64} - \frac{\left(\frac{r_3}{3}\right)^3 r_3^2}{12} + \frac{\left(\frac{r_3}{3}\right)^2 r_3^3}{32} \right) dr_3\\
&= \left(\frac{9 \left(\frac{1}{3}\right)^5}{40} - \frac{3\left(\frac{1}{3}\right)^4}{64} - \frac{\left(\frac{1}{3}\right)^3}{12} + \frac{\left(\frac{1}{3}\right)^2}{32} \right) \int_{r_3 = 0}^{\infty} r_3^6 e^{-n \pi r_3^2}  dr_3\\
&= \frac{19}{25920} \int_{r_3 = 0}^{\infty} r_3^6 e^{-n \pi r_3^2}  dr_3\\
&= \frac{19}{25920} \frac{15}{16 \pi^3 n^{\frac{7}{2}}} \\
&= \frac{19}{27648 \pi^3 n^{\frac{7}{2}}} 
\end{align*}
\end{proof}

\begin{proof}[Proof of Lemma \ref{lemma:second-integral}]
Again we change the order of integration to compute the integral more easily. 

Given $r_3$, the upper bound on $r_1$ is derived by setting $r_3 = r_1  + \sqrt{2r_1^2} \iff r_1 = \frac{r_3}{1 + \sqrt{2}}$. Given $r_3$ and $r_1$, set $r_3 = r_2 + \sqrt{r_1^2 + r_2^2}$. We have
\begin{align*}
&\left(r_3 - r_2\right)^2 = r_1^2 + r_2^2\\
&r_3^2 - 2r_2 r_3 + r_2^2 = r_1^2 + r_2^2\\
&r_3^2 - 2r_2 r_3  = r_1^2\\
&r_2 = \frac{r_3^2 - r_1^2}{2r_3}
\end{align*}
Therefore,
\scriptsize
\begin{align*}
&\int_{r_1 = 0}^{\infty} \int_{r_2 = r_1}^{\infty} \int_{r_3 = r_2 + \sqrt{r_1^2 + r_2^2}}^{\infty} \left(r_3 - \frac{1}{2}r_1 - \frac{1}{2}r_2 - \sqrt{r_1^2 + r_2^2} \right) e^{-n \pi r_3^2}  r_1 r_2 r_3 dr_3 dr_2 dr_1\\
&= \int_{r_3 = 0}^{\infty} r_3 e^{-n \pi r_3^2}\int_{r_1 = 0}^{\frac{r_3}{1 + \sqrt{2}}} r_1 \int_{r_2 = r_1}^{\frac{r_3^2 - r_1^2}{2r_3}} r_2\left(r_3 - \frac{1}{2}r_1 - \frac{1}{2}r_2 - \sqrt{r_1^2 + r_2^2} \right)   dr_2 dr_1 dr_3\\
&= \int_{r_3 = 0}^{\infty} r_3 e^{-n \pi r_3^2}\int_{r_1 = 0}^{\frac{r_3}{1 + \sqrt{2}}} r_1 \left[\frac{r_2^2}{2} \left( r_3 - \frac{1}{2} r_1 \right) - \frac{1}{6} r_2^3 - \frac{1}{3} \left(r_1^2 + r_2^2 \right)^{\frac{3}{2}}\right]   \Big |_{r_2 = r_1}^{\frac{r_3^2 - r_1^2}{2r_3}} dr_1 dr_3\\
&= \int_{r_3 = 0}^{\infty} r_3 e^{-n \pi r_3^2}\int_{r_1 = 0}^{\frac{r_3}{1 + \sqrt{2}}} r_1 \left[\frac{\left(\frac{r_3^2 - r_1^2}{2r_3} \right)^2}{2} \left( r_3 - \frac{1}{2} r_1 \right) - \frac{1}{6} \left(\frac{r_3^2 - r_1^2}{2r_3} \right)^3 - \frac{1}{3} \left(r_1^2 + \left(\frac{r_3^2 - r_1^2}{2r_3} \right)^2 \right)^{\frac{3}{2}} - \frac{r_1^2}{2} \left( r_3 - \frac{1}{2} r_1 \right) + \frac{1}{6} r_1^3 + \frac{1}{3} \left(r_1^2 + r_1^2 \right)^{\frac{3}{2}}\right]   dr_1 dr_3\\
&= \int_{r_3 = 0}^{\infty} r_3 e^{-n \pi r_3^2}\int_{r_1 = 0}^{\frac{r_3}{1 + \sqrt{2}}} r_1 \left[\frac{\left(\frac{r_3^2 - r_1^2}{2r_3} \right)^2}{2} \left( r_3 - \frac{1}{2} r_1 \right) - \frac{1}{6} \left(\frac{r_3^2 - r_1^2}{2r_3} \right)^3 - \frac{1}{3} \left(\frac{\left(r_1^2 + r_3^2 \right)^2}{4r_3^2} \right)^{\frac{3}{2}} - \frac{r_1^2 r_3}{2} + \left(\frac{1}{4} + \frac{1}{6} + \frac{2^{\frac{3}{2}}}{3} \right) r_1^3\right]   dr_1 dr_3\\
&= \int_{r_3 = 0}^{\infty} r_3 e^{-n \pi r_3^2}\int_{r_1 = 0}^{\frac{r_3}{1 + \sqrt{2}}} r_1 \left[\frac{\left(\frac{r_3^2 - r_1^2}{2r_3} \right)^2}{2} \left( r_3 - \frac{1}{2} r_1 \right) - \frac{1}{6} \left(\frac{r_3^2 - r_1^2}{2r_3} \right)^3 - \frac{1}{3} \left(\frac{r_1^2 + r_3^2 }{2r_3} \right)^3 - \frac{r_1^2 r_3}{2} + \left(\frac{1}{4} + \frac{1}{6} + \frac{2^{\frac{3}{2}}}{3} \right) r_1^3\right]   dr_1 dr_3\\
&= \int_{r_3 = 0}^{\infty} r_3 e^{-n \pi r_3^2}\int_{r_1 = 0}^{\frac{r_3}{1 + \sqrt{2}}}  \left[- \frac{r_1^7}{48 r_3^3} - \frac{r_1^6}{16r_3^2} - \frac{r_1^5}{16r_3} + \left(\frac{1}{8} + \frac{1}{4} + \frac{1}{6} + \frac{2^{\frac{3}{2}}}{3} \right)r_1^4 - \frac{13r_1^3 r_3}{16} - \frac{r_1^2 r_3^2}{16} + \frac{r_1 r_3^3}{16}  \right]   dr_1 dr_3\\
&= \int_{r_3 = 0}^{\infty} r_3 e^{-n \pi r_3^2} \left[- \frac{r_1^8}{8 \cdot 48 r_3^3} - \frac{r_1^7}{7 \cdot 16r_3^2} - \frac{r_1^6}{6 \cdot 16r_3} + \frac{1}{5}\left(\frac{1}{8} + \frac{1}{4} + \frac{1}{6} + \frac{2^{\frac{3}{2}}}{3} \right)r_1^5 - \frac{13r_1^4 r_3}{64} - \frac{r_1^3 r_3^2}{48} + \frac{r_1^2 r_3^3}{32}  \right]   \Big |_{r_1 = 0}^{\frac{r_3}{1 + \sqrt{2}}} dr_3\\
&= \int_{r_3 = 0}^{\infty} r_3 e^{-n \pi r_3^2} \left[- \frac{\left(\frac{r_3}{1+\sqrt{2}} \right)^8}{8 \cdot 48 r_3^3} - \frac{\left(\frac{r_3}{1+\sqrt{2}} \right)^7}{7 \cdot 16r_3^2} - \frac{\left(\frac{r_3}{1+\sqrt{2}} \right)^6}{6 \cdot 16r_3} + \frac{1}{5}\left(\frac{1}{8} + \frac{1}{4} + \frac{1}{6} + \frac{2^{\frac{3}{2}}}{3} \right)\left(\frac{r_3}{1+\sqrt{2}} \right)^5 - \frac{13\left(\frac{r_3}{1+\sqrt{2}} \right)^4 r_3}{64} - \frac{\left(\frac{r_3}{1+\sqrt{2}} \right)^3 r_3^2}{48} + \frac{\left(\frac{r_3}{1+\sqrt{2}} \right)^2 r_3^3}{32}  \right]   dr_3\\
&= \left[- \frac{\left(\frac{1}{1+\sqrt{2}} \right)^8}{8 \cdot 48} - \frac{\left(\frac{1}{1+\sqrt{2}} \right)^7}{7 \cdot 16} - \frac{\left(\frac{1}{1+\sqrt{2}} \right)^6}{6 \cdot 16} + \frac{1}{5}\left(\frac{1}{8} + \frac{1}{4} + \frac{1}{6} + \frac{2^{\frac{3}{2}}}{3} \right)\left(\frac{1}{1+\sqrt{2}} \right)^5 - \frac{13\left(\frac{1}{1+\sqrt{2}} \right)^4 }{64} - \frac{\left(\frac{1}{1+\sqrt{2}} \right)^3}{48} + \frac{\left(\frac{1}{1+\sqrt{2}} \right)^2}{32}  \right] \int_{r_3 = 0}^{\infty} r_3^6 e^{-n \pi r_3^2} dr_3  \\
&= \left[- \frac{\left(\frac{1}{1+\sqrt{2}} \right)^8}{8 \cdot 48} - \frac{\left(\frac{1}{1+\sqrt{2}} \right)^7}{7 \cdot 16} - \frac{\left(\frac{1}{1+\sqrt{2}} \right)^6}{6 \cdot 16} + \frac{1}{5}\left(\frac{1}{8} + \frac{1}{4} + \frac{1}{6} + \frac{2^{\frac{3}{2}}}{3} \right)\left(\frac{1}{1+\sqrt{2}} \right)^5 - \frac{13\left(\frac{1}{1+\sqrt{2}} \right)^4 }{64} - \frac{\left(\frac{1}{1+\sqrt{2}} \right)^3}{48} + \frac{\left(\frac{1}{1+\sqrt{2}} \right)^2}{32}  \right] \frac{15}{16\pi^3 n^{\frac{7}{2}}} 
\end{align*}
\end{proof}
\normalsize

\bibliographystyle{plain}
\bibliography{../MIT_Project_5}
\end{document}